\newtheorem{proposition}{Proposition}
\newtheorem{theorem}{Theorem}
\title{\LARGE \bf
Dynamic Contracts with Partial Observations:\\ Application to Indirect Load Control}
\author{Insoon Yang 
\thanks{I. Yang is with the Department of Electrical Engineering and Computer Sciences, University of California, Berkeley, CA 94720, USA
        {\tt\small iyang@eecs.berkeley.edu}}%
\and Duncan S. Callaway
\thanks{D. S. Callaway is with the Energy and Resources Group, University of California, Berkeley, CA 94720, USA
        {\tt\small dcal@berkeley.edu}}
\and Claire J. Tomlin
\thanks{C. J. Tomlin is with the Department of Electrical Engineering and Computer Sciences, University of California, Berkeley, CA 94720, USA 
        {\tt\small tomlin@eecs.berkeley.edu}}%
}
\date{}
\begin{document}

\maketitle
\pagestyle{plain}

\begin{abstract}
This paper proposes a method to design an optimal dynamic contract between a principal and an agent, who has the authority to control both the principal's revenue and an engineered system. 
The key characteristic of our problem setting is that the principal has very limited information: the principal has no capability to monitor the agent's control or the state of the engineered system. The agent has perfect observations.
With this asymmetry of information, we show that the principal can induce the agent to control both the revenue and the system processes in a way that maximizes the principal's utility, if the principal offers appropriate real-time and end-time compensation. 
We reformulate the dynamic contract design problem as a stochastic optimal control of both the engineered system and the agent's future expected payoff, which can be numerically solved using an associated Hamilton-Jacobi-Bellman equation.
The performance and usefulness of the proposed contract are demonstrated with an indirect load control problem.
\end{abstract}

\section{Introduction}
Designing contracts and incentives is an important problem in economics and engineering.
Economics studies on the principal-agent problem analyze how a contract between a principal (e.g., a company) and an agent (e.g., a worker) can be designed in an uncertain environment \cite{Laffont2002}.
Contract design problems have also been investigated to incentivize agents in engineered infrastructure, such as electric power systems, to improve its operation and regulation \cite{Gedra1993, Bitar2012}.

Dynamic contract studies in economics and finance primarily concern the principal's (stochastic) revenue stream as a dynamic system of interest and study how to design compensation schemes to motivate the agent to control the revenue process in a way that maximizes the principal's profit \cite{Holmstrom1987, DeMarzo2006}.
In engineering, on the other hand, the dynamic contracts can also be used to manage the agent's control of an engineered system. 
For example, an electricity utility company may want to make a (dynamic) contract with a building manager who has the authority to control the indoor temperature of a building. 
The utility offers a dynamic compensation scheme for the manager to control the building temperature dynamics
in a way that maximizes the utility's profit (or minimizes its costs) while maintaining the indoor temperature level within a desirable range. 
This idea of \emph{indirect load control} can be realized by a dynamic contract design method that is capable of managing engineered systems.

In this study, we propose a method to design a dynamic contract  when the agent controls both the principal's revenue and an (engineered) system that affect the principal's utility. 
The key advantage of the proposed method is that it is effective for situations in which the principal's monitoring capabilities are very limited and thus the principal has partial observations. We will investigate the specific case in which
the principal is not able to monitor the agent's control or the state of the engineered system and can only observe the output of her noisy revenue process.
Therefore, the proposed method is useful for systems in which the agent's privacy is critical or monitoring the system is costly.
The contract that we study is \emph{dynamic} in the sense that, based on the observations of her revenue process, the principal dynamically chooses the compensation for the agent to generate an \emph{incentive compatible} control, which maximizes the agent's utility. 
The contract is \emph{optimal} in the sense that the combination of the compensation scheme and the incentive compatible control strategy maximizes the principal's utility.

The most relevant work to ours is that of Sannikov \cite{Sannikov2008}, in which the contract takes into account only the principal's revenue. This study suggests that the agent's expected future payoff can be used as a performance index and that an optimal compensation scheme and an incentive compatible control can be chosen as feedback maps from the index in an infinite horizon setting. 
In this work, we consider a dynamic contract in a finite time horizon.
We suggest that the agent's future expected payoff can help in designing an optimal contract in our setting in which the principal has partial observations,  if the end-time compensation is properly chosen. However, having only the agent's future expected payoff is insufficient for designing an optimal contract when an engineered system also affects the principal's utility. We thus reformulate the dynamic contract design problem as a stochastic optimal control of both the agent's future expected payoff and the engineered system dynamics.
Applying the dynamic programming principle, the value function of the reformulated problem can be computed as a \emph{viscosity solution} of a Hamilton-Jacobi-Bellman (HJB) equation.
As a result, an optimal compensation scheme and incentive compatible control strategy can be written as feedback maps from both the agent's expected future payoff and the engineered system state.

A model in which a noisy nonlinear system state is directly observed by the principal is studied using forward-backward stochastic differential equations (FBSDEs) \cite{Cvitanic2009}. This stochastic maximum principle-based approach is extensively investigated in a recent monograph \cite{Cvitanic2012}. \cite{Williams2009} also characterizes optimal contracts using FBSDEs when the agent's private system is controlled with a separate variable. However, it does not provide a detailed solution except for special cases in which analytic solutions are available. 
A completely different model is studied in \cite{Biais2010}, in which the agent's effort is modeled by a Poisson process.

The rest of the paper is organized as follows. 
In Section \ref{setting}, we present the problem setting in which a principal makes a contract with an agent who has the authority to control the principal's revenue and an (engineered) system, and we discuss the availability of information to the principal.
In Section \ref{agentIC}, we characterize the incentive compatibility of  the agent's control in a finite horizon setting.
In Section \ref{principalOC}, we formulate the principal's optimal contract design problem as a stochastic optimal control problem and show the optimality of the resulting contract.
Lastly, the performance of the proposed dynamic contract is demonstrated with an example of indirect load control in Section \ref{example}.


\section{The Setting} \label{setting}

Consider the principal's revenue process, $x := \{ x_t\}_{0\leq t \leq T}$, $x_t \in \mathbb{R}$, and the (engineered) system process, $y := \{ y_t \}_{0\leq t \leq T}$, $y_t \in \mathbb{R}$, that affects the principal's utility: 
\begin{equation} \label{dynamics}
\begin{split} 
dx_t &= u_t dt + \sigma dW_t\\
dy_t &= f(y_t, u_t) dt,
\end{split}
\end{equation}
given initial values with $\sigma \in \mathbb{R}$.
Here, $\{W_t\}_{t \geq 0}$ is an one-dimensional standard Brownian motion on a probability space $(\Omega, \mathcal{F}, \mathbf{P})$. Let $\{\mathcal{F}_t^W\}_{t \geq 0}$ be the filtration generated by the Brownian motion.
For convenience, we consider a one-dimensional engineered system, but the method proposed in this study can be applied to any system with multiple dimensions as well.
Note that the control process $u:= \{u_t\}_{0\leq t\leq T}$ affects both the revenue and system processes. 
The principal wants to hire an agent to manage both processes. That is, the agent has the authority to determine $u$, while the principal does not.
It would be ideal for the principal if the agent's utility and principal's utility are the same: the (rational) agent then behaves in a way that maximizes the principal's utility. 
In reality, however, the agent's utility is not aligned with that of the principal. Therefore, the principal has to incentivize the agent to be cooperative by providing appropriate compensation.

\begin{figure}[tb] 
\begin{center}
\includegraphics[width =3.3in]{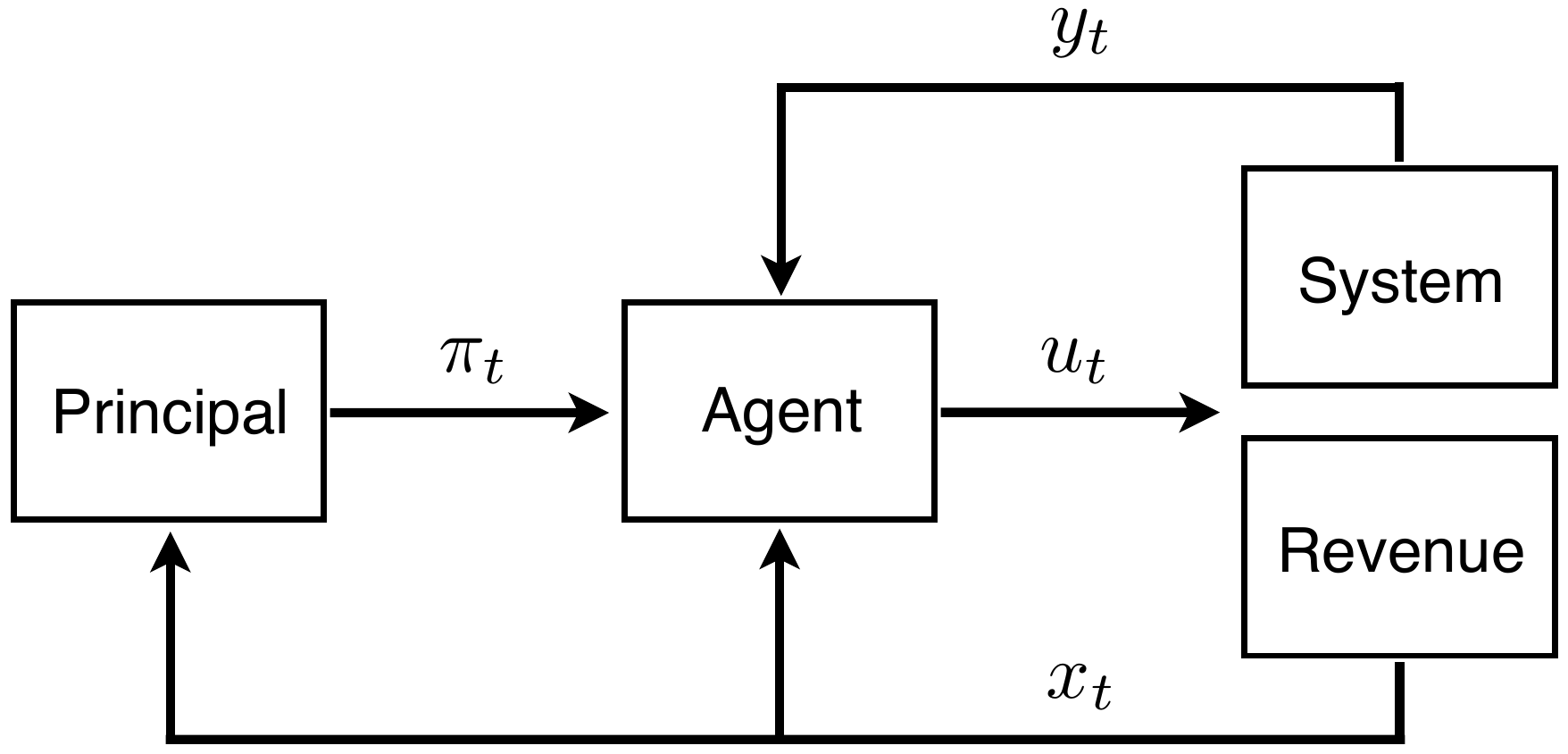}
\caption{Information flow at time $t$ under dynamic contract}
 \label{fig:diagram}
 \end{center}
\end{figure}  

A contract $(\pi, C, u^*)$ between the principal and the agent specifies the real-time compensation $\pi := \{\pi_t\}_{0\leq t \leq T}$, $\pi_t \in \mathbb{R}$, the end-time compensation $C \in \mathbb{R}$ and the recommended control strategy $u^* := \{ u_t^* \}_{0\leq t \leq T}$, $u_t^* \in \mathbb{R}$. 
The principal offers the contract at time $0$. The agent accepts the contract if the agent's expected total payoff is greater than some threshold, called the \emph{participation payoff}, assuming that he follows the recommended control strategy. This condition is often called \emph{individual rationality}.

If the principal can monitor the agent's control process, the principal can enforce any $u^*$ by imposing a significant penalty (written down in the contract) on the agent when his choice deviates from $u^*$.
However, monitoring the agent's effort is not possible (or is costly) in many practical applications. 
In addition, we consider a situation in which the principal is not able to (or is not willing to) monitor the system process $y$.
The principal can only observe her revenue process $x$, as shown in Fig. \ref{fig:diagram}. 
Therefore, the agent is free to deviate from the recommended control (or effort) level without informing the principal of his deviation. 
For example, the agent can shirk and then attribute the resulting low revenue to noise.
On the other hand, the agent has perfect observations of the revenue, system and control processes.
Fig. \ref{fig:diagram} depicts the information flow in our problem setting.

Due to her limited monitoring capability and thus partial observations, 
the principal cannot enforce an arbitrary recommended control upon the agent: the principal can only enforce an \emph{incentive compatible} control, which maximizes the agent's expected utility given the compensation scheme in the contract.
Therefore, the principal's best strategy in this setting of asymmetric information is to choose a compensation scheme such that its incentive compatible control of the agent and the compensation scheme  maximize the principal's expected utility.

This study uses the following expected payoffs for the principal and agent:
\begin{equation}\nonumber
\begin{split}
\mbox{(principal)} \quad &\mathbb{E} \left [ \int_0^T \left (dx_t +  r^P (y_t, \pi_t) dt \right ) + q(y_T) - C  \right ],\\
\mbox{(agent)} \quad &\mathbb{E} \left [ \int_0^T (r^A (\pi_t) - h(u_t)) dt + g(C)  \right ].
\end{split}
\end{equation}
Here, $r^P(y_t, \pi_t)$ and $q(y_T)$ denote the real-time and terminal rewards that the principal obtains. 
In addition, $h(u_t)$ is the cost for control $u_t$, and $r^A(\pi_t)$ and $g(C)$ denote the utilities coming from the real-time and end-time compensation, respectively. Note that the agent is indifferent to the principal's revenue and the engineered system performance.  
This utility model of the agent is useful for the situations in which the principal owns the engineered system (infrastructure). 
As we will see in the indirect load control problem in Section \ref{example}, using this model is also reasonable when it is difficult to quantify the agent's payoff as a function of the system state.
We assume that the principal is risk neutral while the agent is risk averse.
Let $r^A$ be concave and $h$ be differentiable, convex and increasing. We also assume that $g$ is a continuous invertible function, which is concave and strictly increasing.
We introduce the sets of feasible control, real-time compensation and end-time compensation schemes as
\begin{equation} \nonumber
\begin{split}
\mathbb{U} &:= \{ u : [0,T] \to \mathcal{U} \: | \: u \mbox{ progressively measurable with respect to $\mathcal{F}_t^W$} \},\\
\Pi &:= \{ \pi : [0,T] \to \mathcal{P} \: | \: \pi \mbox{ progressively measurable with respect to $\mathcal{F}_t^x$}  \},\\
\mathbb{C} &:= \{ C \in \mathbb{R} \: | \: C \mbox{ is $\mathcal{F}_T^x$-measurable} \},
\end{split}
\end{equation}
respectively,
where $\mathcal{U}$ and $\mathcal{P}$ are compact sets in $\mathbb{R}$ and $\{\mathcal{F}_t^x\}_{0\leq t \leq T}$ is the filtration generated by $x$.
We assume that $f :\mathbb{R} \times \mathcal{U} \to \mathbb{R}$ is continuous and that there exists $L >0$ such that $|f(y_1, a) - f(y_2, a)| \leq L|y_1 - y_2|$ for all $y_1, y_2 \in \mathbb{R}$ and $a \in \mathcal{U}$.

\section{The Agent's Incentive Compatible Control}
\label{agentIC}

To design the real-time and end-time compensation schemes that are optimal for the principal, we first characterize a condition in which the agent's control is incentive compatible. 
Given the real-time and end-time compensation, $(\pi, C)$, the agent's \emph{incentive compatible} control is the solution of
\begin{equation}\nonumber
\max_{u \in \mathbb{U}} \; \mathbb{E}^{u} \left [ \int_0^T (r^A(\pi_t) - h(u_t) ) dt + g(C) \right ],
\end{equation}
i.e., any control that is incentive compatible with $(\pi,C)$ maximizes the agent's expected total utility given $(\pi,C)$.
Here, the expectation is taken under the probability measure $\mathbf{P}^{u}$ induced by the agent's control $u$.
We introduce a new variable that denotes the agent's expected future payoff, i.e., 
\begin{equation}\nonumber
w_t := \mathbb{E}^{u} \left [ \left. \int_t^T (r^A(\pi_s) - h(u_s))  dt + g(C) \right | \mathcal{F}_t^W \right ].
\end{equation}
Note that the new variable also depends on the end-time compensation, while the one in Sannikov \cite{Sannikov2008} does not because there is no end-time compensation in the infinite horizon setting of Sannikov.
The dynamics of the agent's expected future payoff can be written as \eqref{cont} in the following proposition.

\begin{proposition} \label{prop1}
Given $(\pi, C)$,
there exists an $\{\mathcal{F}_t^W\}_{0\leq t \leq T}$-adapted process $\xi := \{ \xi_t \}_{0\leq t \leq T}$ (depending on $(\pi, C, u)$) such that
\begin{equation}\label{cont}
w_t = w_0 - \int_0^t( r^A(\pi_s) - h(u_s)) ds + \int_0^t \xi_s (dx_s - u_s ds)
\end{equation}
for all $t \in [0,T]$.
\end{proposition}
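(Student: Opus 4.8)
The plan is to construct the process $\xi$ by applying the martingale representation theorem to a suitable martingale built from the agent's expected future payoff. First I would define the process
\begin{equation}\nonumber
M_t := \mathbb{E}^{u}\left[\left.\int_0^T (r^A(\pi_s) - h(u_s))\,ds + g(C)\,\right|\,\mathcal{F}_t^W\right],
\end{equation}
which is by construction a (square-integrable, under suitable integrability assumptions on $\pi$, $h(u)$ and $g(C)$) martingale with respect to $\{\mathcal{F}_t^W\}$ under $\mathbf{P}^u$. By definition of $w_t$ one has the bookkeeping identity $M_t = \int_0^t (r^A(\pi_s) - h(u_s))\,ds + w_t$, so $w_t = M_t - \int_0^t (r^A(\pi_s) - h(u_s))\,ds$, and in particular $w_0 = M_0$.

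Next I would invoke the martingale representation theorem for Brownian filtrations. The subtlety is that $M$ is a martingale under $\mathbf{P}^u$, not under $\mathbf{P}$; so I would first identify the $\mathbf{P}^u$-Brownian motion. By Girsanov's theorem, since under $\mathbf{P}$ we have $dx_t = u_t\,dt + \sigma\,dW_t$, the process $\widetilde W_t := W_t + \int_0^t (u_s/\sigma)\,ds$ — equivalently $\sigma\,d\widetilde W_t = dx_t - u_t\,dt$ — is a standard Brownian motion under $\mathbf{P}^u$, and it generates the same filtration $\{\mathcal{F}_t^W\}$. The martingale representation theorem then gives an $\{\mathcal{F}_t^W\}$-adapted, square-integrable process $\{\zeta_t\}$ with $M_t = M_0 + \int_0^t \zeta_s\,d\widetilde W_s$. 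Setting $\xi_t := \zeta_t/\sigma$ rewrites this as $M_t = M_0 + \int_0^t \xi_s\,(dx_s - u_s\,ds)$. Substituting into $w_t = M_t - \int_0^t (r^A(\pi_s)-h(u_s))\,ds$ and using $w_0 = M_0$ yields \eqref{cont}. The dependence of $\xi$ on $(\pi, C, u)$ is explicit: $\pi$, $C$ enter through the terminal/running data defining $M$, and $u$ enters both there and through the change of measure defining $\widetilde W$.

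I would expect the main obstacle to be the technical justification of the martingale representation step — specifically, verifying that $M$ is genuinely a martingale (and ideally square-integrable or at least in a suitable $L^1$ class) under $\mathbf{P}^u$, which requires integrability of $g(C)$ and of $\int_0^T |r^A(\pi_s) - h(u_s)|\,ds$, and confirming that the Girsanov density is a true martingale so that $\mathbf{P}^u$ is a genuine probability measure equivalent to $\mathbf{P}$. Since $\mathcal{U}$ and $\mathcal{P}$ are compact and $h$, $r^A$ are continuous, the running term is bounded, so the only real assumption needed is integrability of $g(C)$; compactness of $\mathcal{U}$ also gives a bounded Girsanov exponent (Novikov's condition is immediate). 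With these in hand the representation applies on the Brownian filtration, and the remainder of the argument is the algebraic rearrangement indicated above. A minor point worth noting is that one should argue that $\{\mathcal{F}_t^W\}$ coincides with the filtration generated by $\widetilde W$ (true because the two Brownian motions differ by a bounded-variation adapted drift), which is what licenses applying the representation theorem in terms of $\widetilde W$ while keeping $\xi$ adapted to $\{\mathcal{F}_t^W\}$.
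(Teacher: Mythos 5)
Your proposal takes essentially the same route as the paper, whose proof is exactly the Sannikov/Ekeland-style application of the martingale representation theorem to the conditional expectation of the agent's total payoff under $\mathbf{P}^u$, followed by rewriting the integrator as $dx_s - u_s\,ds$. One small slip to fix: with the stated $\mathbf{P}$-dynamics $dx_t = u_t\,dt + \sigma\,dW_t$, your two descriptions of $\widetilde W$ are inconsistent ($W_t + \int_0^t (u_s/\sigma)\,ds$ satisfies $\sigma\,d\widetilde W_t = dx_t$, not $dx_t - u_t\,dt$); the $\mathbf{P}^u$-Brownian motion you actually need is $\widetilde W_t = \sigma^{-1}\bigl(x_t - \int_0^t u_s\,ds\bigr)$, exactly as the paper uses in the proof of Theorem \ref{thm1}, and with that identification the rest of your argument (integrability from compactness of $\mathcal{U}$, $\mathcal{P}$, representation on the Brownian filtration, and the algebraic rearrangement) goes through as intended.
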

Similar to those in Sannikov \cite{Sannikov2008} and Ekeland \cite{Ekeland2013}, the proposition can be proved using the martingale representation theorem \cite{Karatzas1991}.
Proposition \ref{prop1} allows us to write the dynamics of the agent's expected future payoff as the following stochastic differential equation:
\begin{equation} \label{sde}
\begin{split}
dw_t &= -(r^A(\pi_t) - h(u_t) + \xi_t u_t) dt + \xi_t dx_t\\
w_T &= g(C).
\end{split}
\end{equation}
Even if the agent changes his control level and the system process is changed accordingly, the principal's compensation scheme stays unchanged because the principal is not able to observe the agent's control strategy or the (engineered) system process.
Therefore, the agent has an incentive to choose his control level that maximizes $-h(u_t) + \xi_t u_t$, given $(\pi, C)$.

\begin{proposition} \label{prop2}
Let $\xi$ be the $\{\mathcal{F}_t^W\}_{0\leq t \leq T}$-adapted process defined by \eqref{cont} given $(\pi, C, u)$. The control process $u$ is incentive compatible with $(\pi, C)$ if and only if 
\begin{equation}\label{condition}
-h(u_t ) + \xi_t u_t = \max_{a \in \mathcal{U}} \left \{ -h (a) + \xi_t a \right \}, \quad t \in [0,T],
\end{equation}
almost everywhere.
\end{proposition}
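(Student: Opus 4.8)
The plan is to fix the data $(\pi,C,u)$ together with the process $\xi$ supplied by Proposition~\ref{prop1}, and to compare the agent's payoff under $u$ with the payoff under an arbitrary alternative control $\tilde u\in\mathbb{U}$ by reading off the semimartingale decomposition \eqref{sde} of $w$ under the induced measure $\mathbf{P}^{\tilde u}$. Write $J(v):=\mathbb{E}^{v}\big[\int_0^T(r^A(\pi_s)-h(v_s))\,ds+g(C)\big]$ for the agent's total expected utility under a control $v\in\mathbb{U}$. Since $\mathcal{F}_0^W$ is $\mathbf{P}$-trivial, $w_0$ is a deterministic constant with $w_0=J(u)$, so $u$ is incentive compatible precisely when $w_0=J(u)\ge J(\tilde u)$ for every $\tilde u\in\mathbb{U}$.

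First I would note that \eqref{sde}, being a pathwise relation, also holds $\mathbf{P}^{\tilde u}$-almost surely because $\mathbf{P}^{\tilde u}$ is equivalent to $\mathbf{P}$. Under $\mathbf{P}^{\tilde u}$, Girsanov's theorem gives $dx_t=\tilde u_t\,dt+\sigma\,d\tilde W_t$ for a $\mathbf{P}^{\tilde u}$-Brownian motion $\tilde W$; substituting this into \eqref{sde}, with the coefficients $\xi$ and $h(u)$ kept fixed since $w$ is the process attached to $u$, gives
\[
dw_t=\big[-(r^A(\pi_t)-h(u_t))-\xi_t u_t+\xi_t\tilde u_t\big]\,dt+\sigma\xi_t\,d\tilde W_t .
\]
Integrating over $[0,T]$, using $w_T=g(C)$, and taking $\mathbb{E}^{\tilde u}$ — the stochastic integral has zero mean because $\xi$ is square-integrable, a property carried over from the martingale representation theorem used in Proposition~\ref{prop1} — I obtain, after rearranging,
\[
w_0-J(\tilde u)=\mathbb{E}^{\tilde u}\!\left[\int_0^T\Big(\big(-h(u_s)+\xi_s u_s\big)-\big(-h(\tilde u_s)+\xi_s\tilde u_s\big)\Big)\,ds\right].
\]
This identity reduces the global comparison of payoffs to the pointwise problem $\max_{a\in\mathcal{U}}\{-h(a)+\xi_s a\}$ appearing in \eqref{condition}.

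For the ``if'' direction: if \eqref{condition} holds then $-h(u_s)+\xi_s u_s\ge -h(\tilde u_s)+\xi_s\tilde u_s$ for every $\tilde u\in\mathbb{U}$, so the integrand in the last display is nonnegative and $J(u)=w_0\ge J(\tilde u)$, i.e.\ $u$ is incentive compatible. For the ``only if'' direction: suppose \eqref{condition} fails on a set of positive $dt\otimes d\mathbf{P}$-measure. Since $a\mapsto -h(a)+\xi_s a$ is concave on the compact set $\mathcal{U}$, its maximizer admits a progressively measurable selection $a^*_s\in\mathcal{U}$; put $\tilde u_s:=a^*_s\in\mathbb{U}$. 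Then the integrand in the identity for $w_0-J(\tilde u)$ is $\le 0$ everywhere and strictly negative on a set of positive $dt\otimes d\mathbf{P}$-measure, hence of positive $dt\otimes d\mathbf{P}^{\tilde u}$-measure by equivalence of the measures, so $J(\tilde u)>w_0=J(u)$, contradicting incentive compatibility. This forces \eqref{condition} to hold almost everywhere.

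I expect the real obstacles to be measure-theoretic rather than algebraic: (i) justifying that $\int_0^T\sigma\xi_s\,d\tilde W_s$ is a genuine martingale under the perturbed measure $\mathbf{P}^{\tilde u}$, which requires controlling the integrability of $\xi$ and uses compactness of $\mathcal{U}$ and $\mathcal{P}$ together with whatever integrability on $g(C)$ makes Proposition~\ref{prop1} valid; and (ii) constructing the progressively measurable selection of the pointwise maximizer in the necessity argument, which relies on convexity of $h$ and compactness of $\mathcal{U}$.
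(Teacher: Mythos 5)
Your proof is correct and is essentially the argument the paper itself relies on: the paper gives no details for Proposition~\ref{prop2}, deferring to Sannikov and Ekeland, and your comparison of $w_0=J(u)$ with $J(\tilde u)$ via the continuation-value representation \eqref{cont} plus Girsanov is precisely the standard finite-horizon version of that argument. The two technical points you flag are handled exactly as you suggest — compactness of $\mathcal{U}$ makes the Girsanov density square-integrable, so Cauchy--Schwarz and BDG give the martingale property of $\int\sigma\xi_s\,d\tilde W_s$ under $\mathbf{P}^{\tilde u}$, and convexity of $h$ with compact $\mathcal{U}$ yields the measurable selection — so there is no gap.
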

This proposition can be shown using the argument in \cite{Sannikov2008} and \cite{Ekeland2013} even in our finite time horizon setting.
Note that $\xi_t$ represents the sensitivity of the agent's expected future payoff to the principal's noisy revenue.
Because the agent is risk averse while the principal is risk neutral, they select the minimum level of $\xi_t$ such that it makes the recommended control  incentive compatible. We denote this level by $\theta(u_t)$, which can be expressed as
\begin{equation}\nonumber
\theta(u_t) := \min_{0 \leq z < \infty} \: \left \{ z \: | \: u_t \in \arg \max_{a \in \mathcal{U}} \{ -h(a) + z a\}  \right \},
\end{equation}
as claimed in \cite{Sannikov2008}.
Note that the sensitivity $\xi_t$ also depends on $(\pi, C)$: the risk-neutral principal offers $(\pi,C)$ such that $\xi_t[\pi,C,u] = \theta(u_t)$ for the risk-averse agent.
If $h$ is strictly convex and the principal chooses a contract $(\pi, C, u)$ such that 
$\xi_t[\pi, C, u] = h'(u_t)$, $t \in [0,T]$,
a.e.,
then the agent's incentive compatible control is $u$, and hence, $\theta(u_t) = \xi_t[\pi,C, u] = h'(u_t)$.
To design such a contract, the principal can use the agent's expected future payoff as a performance index for the agent.
However, the principal needs another performance index for the engineered system because it also affects her utility.
Taking into account both performance indices, the contract design problem can be formulated as the stochastic optimal control of the agent's expected future payoff process \eqref{sde} and the (engineered) system, as discussed in the following section.




\section{The Principal's Problem: Optimal Dynamic Contract} \label{principalOC}

We now turn our attention to the principal's problem: she wants to choose real-time and end-time compensation such that both the compensation scheme and the corresponding agent's incentive compatible control $(i)$ maximize the principal's utility and $(ii)$ make the agent's total payoff exceed the participation payoff, $b \in \mathbb{R}$. This problem can be formulated as the following optimization:
\begin{subequations} \label{principal}
\begin{align}
 \max_{\substack{u \in \mathbb{U}, \pi \in \Pi,\\ C \in \mathbb{C}}} \quad &\mathbb{E} \left [\int_0^T u_t + r^P(y_t,  \pi_t)  dt + q(y_T) - C \right ]\\
\mbox{subject to} \quad & dx_t = u_t dt + \sigma dW_t\\
& dw_t = -(r^A(\pi_t) - h(u_t) + \xi_t u_t) dt + \xi_t dx_t  , \quad t\in [0,T]\label{continuation}\\ 
& w_T = g(C) \label{terminal}\\ 
& \xi_t = \theta (u_t), \quad t\in [0,T], \quad \mbox{a.e.} \label{ic2}\\ 
& w_0 \geq b \label{ir}\\ 
&d y_t = f(y_t, u_t) dt, \quad t\in [0,T] \label{engineer} \\ 
&y_0 = y^0, \label{init}
\end{align}
\end{subequations}
where \eqref{continuation} and \eqref{terminal} specify the dynamics of the agent's expected future payoff, \eqref{ic2} is the condition for the agent's incentive compatible control, and \eqref{ir} imposes the individual rationality condition that the agent's expected total payoff should exceed the participation payoff.
Furthermore, \eqref{engineer} and \eqref{init} should be included because the principal's utility is affected by the state, $y_t$, of the engineered system.

If $w_0 > b$, then the principal has an incentive to decrease the end-time compensation $C$ because $g$ is increasing. Therefore, the constraint \eqref{ir} can be substituted with $w_0 = b$. Recalling that $g$ is an invertible function, we reformulate the problem as 
\begin{subequations}\label{principal2}
\begin{align}
\max_{u \in \mathbb{U}, \pi \in \Pi'} \quad &\mathbb{E} \left [\int_0^T u_t + r^P(y_t, \pi_t)  dt + q(y_T) - g^{-1} (w_T)\right ] \nonumber\\
\mbox{subject to} \quad & dw_t = -(r^A(\pi_t) - h(u_t) ) dt + \theta (u_t) \sigma dW_t, \quad t \in [0,T] \label{continuation2}\\
& w_0 = b \label{init2}\\
&d y_t = f(y_t, u_t) dt, \quad t \in [0,T]\\
&y_0 = y^0,
\end{align}
\end{subequations}
where $\Pi' := \{ \pi : [0,T] \to \mathcal{P} \: | \: \pi \mbox{ progressively measurable}$ $\mbox{with respect to $\mathcal{F}_t^W$} \} \supseteq \Pi$.
We will show that its solution also solves \eqref{principal} in Theorem \ref{thm2}.
Note that this problem has the form of a standard stochastic optimal control problem.
We can solve this problem using dynamic programming. We define the value function of the optimal control as
\begin{equation}\nonumber
\phi (\bm{w}, \bm{y}, t) := \max_{u \in \mathbb{U}, \pi \in \Pi'} \: J_{\bm{w}, \bm{y},t}^P [u, \pi],
\end{equation}
where 
\begin{equation}\nonumber
\begin{split}
&J_{\bm{w}, \bm{y},t}^P [ u, \pi] := \mathbb{E}_{\bm{w}, \bm{y},t}  \left [ \int_t^T u_s + r^P(y_s, \pi_s)  ds + q(y_T) - g^{-1} (w_T) \right ].
\end{split}
\end{equation}
Here, $\mathbb{E}_{\bm{w}, \bm{y},t} [B]$ denotes the expected value of $B$ conditioned on $(w_t, y_t) = (\bm{w}, \bm{y})$.
The value function corresponds to the viscosity solution of the following Hamilton-Jacobi-Bellman equation \cite{Crandall1983}:
\begin{equation} \nonumber
\begin{split}
&\frac{\partial \phi}{\partial t} + \max_{(p,a) \in \mathcal{P} \times \mathcal{U}} \left \{-(r^A(p) - h(a)) D_{\bm{w}} \phi + f(\bm{y}, a) D_{\bm{y}} \phi  \right.\\
&\left. \qquad \qquad \qquad \quad \: + a + r^P(\bm{y},p) + \frac{1}{2} (\theta (a) \sigma)^2 D_{\bm{w}}^2 \phi  \right \} = 0,
\end{split}
\end{equation}
with the terminal condition $\phi(\bm{w},\bm{y},T) = -g^{-1}(\bm{w}) + q(\bm{y})$.
By solving the HJB equation, we can determine an optimal compensation scheme and an optimal recommended control strategy. Let $(w^*_0, y^*_0) = (b, y^0)$. Given an optimal trajectory $(w_s^*, y_s^*)$ for $s \in [0,t]$, an optimal real-time compensation and a recommended (incentive compatible) control can be obtained as
\begin{equation}\nonumber
\begin{split}
(\pi^*_t, u_t^*) &:= \arg \max_{(p,a) \in \mathcal{P} \times \mathcal{U}} \left \{  
- (r^A(p) - h(a)) D_{\bm{w}} \phi (w_t^*, y_t^*)  +  f(y_t^*, a) D_{\bm{y}} \phi (w_t^*, y_t^*)  + a + r^P(y_t^*,p) \right.\\
&\left. \qquad \qquad \qquad \quad \;\:\:+ \frac{1}{2} (\theta (a) \sigma)^2  D_{\bm{w}}^2 \phi(w_t^*, y_t^*)
\right \}
\end{split}
\end{equation} 
and an optimal end-time compensation is computed as 
\begin{equation}\nonumber
C^* = g^{-1}(w^*_T).
\end{equation}
A more detailed discussion regarding how to synthesize an optimal control from a viscosity solution of an associated HJB equation can be found in \cite{Bardi1997} even when the solution $\phi$ is not differentiable.
We notice that the optimal compensation and control are Markovian with respect to $(w_t^*, y_t^*)$.
Therefore, $\pi^*$ must be in $\Pi$, which is the original feasible set.
Recall that the principal cannot observe the system process $y$; thus, it does not seem feasible for the principal to make this contract. If the agent chooses the recommendation $u^*$ as his control, however, the principal can infer the state of the system from its dynamics, i.e.,
\begin{equation}\nonumber
\begin{split}
dy_t^* &= f(y_t^*, u_t^*) dt\\ 
y_0^* &= y^0. 
\end{split}
\end{equation}
Because the differential equation has a unique solution, her inference concerning the state of the system must be correct. 
Therefore, the principal has the correct system process information if the recommended control $u^*$ is incentive compatible with $(\pi^*, C^*)$. 
In the following theorem, we prove that the recommended control $u^*$ is indeed incentive compatible with $(\pi^*, C^*)$.

\begin{theorem} \label{thm1}
Let $(u^*, \pi^*)$ be a solution of \eqref{principal2} and $C^* := g^{-1} (w_T^*)$, where $w^*$ is the process driven by
\begin{equation}\label{sde2}
\begin{split}
dw_t^* &= -(r^A(\pi_t^*) - h(u_t^*)) dt + \theta (u_t^*) (dx_t -  u_t^* dt)\\
w_0^* &= b.
\end{split}
\end{equation}
Then, $u^*$ is the agent's control that is incentive compatible with $(\pi^*, C^*)$.
\end{theorem}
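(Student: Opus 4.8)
The plan is to identify the deterministic feedback process $\theta(u^*)=\{\theta(u_t^*)\}_{0\le t\le T}$ with the sensitivity process $\xi$ that Proposition \ref{prop1} associates with the triple $(\pi^*,C^*,u^*)$, and then to apply the incentive-compatibility characterization of Proposition \ref{prop2}.

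First I would show that $w^*$ defined by \eqref{sde2} is exactly the agent's expected future payoff under the control $u^*$ given $(\pi^*,C^*)$. Since $(u^*,\pi^*)$ solves \eqref{principal2}, under the measure $\mathbf{P}^{u^*}$ induced by the agent playing $u^*$ one has $dx_t-u_t^*dt=\sigma\,dW_t$, so \eqref{sde2} becomes $dw_t^*=-(r^A(\pi_t^*)-h(u_t^*))\,dt+\theta(u_t^*)\sigma\,dW_t$ with terminal value $w_T^*=g(C^*)$ by the definition $C^*=g^{-1}(w_T^*)$. Integrating from $t$ to $T$ and rearranging gives
\[
w_t^* = g(C^*) + \int_t^T \big(r^A(\pi_s^*)-h(u_s^*)\big)\,ds - \int_t^T \theta(u_s^*)\sigma\,dW_s .
\]
Because $\mathcal{U}$ is compact, $\theta(u^*)$ is bounded, so the last stochastic integral is an $\{\mathcal{F}_t^W\}$-martingale under $\mathbf{P}^{u^*}$; taking $\mathbb{E}^{u^*}[\,\cdot\mid\mathcal{F}_t^W]$ removes it and yields $w_t^*=\mathbb{E}^{u^*}\big[\int_t^T(r^A(\pi_s^*)-h(u_s^*))\,ds+g(C^*)\mid\mathcal{F}_t^W\big]$, which is precisely the expected-future-payoff process of the agent for $(\pi^*,C^*)$ when he follows $u^*$ (at $t=0$ this is consistent with $w_0^*=b$).

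Next I would apply Proposition \ref{prop1} to $(\pi^*,C^*,u^*)$: there is an $\{\mathcal{F}_t^W\}$-adapted process $\xi$ with $w_t^* = w_0^* - \int_0^t (r^A(\pi_s^*)-h(u_s^*))\,ds + \int_0^t \xi_s\,(dx_s-u_s^*\,ds)$, i.e. the martingale $w_t^*+\int_0^t(r^A(\pi_s^*)-h(u_s^*))\,ds$ has representing integrand $\xi_s\sigma$ under $\mathbf{P}^{u^*}$. But \eqref{sde2} exhibits the same martingale with integrand $\theta(u_s^*)\sigma$, so the Itô isometry (uniqueness of the integrand in the martingale representation) forces $\xi_s=\theta(u_s^*)$ for a.e.\ $s$, $\mathbf{P}^{u^*}$-a.s.; since $\mathbf{P}^{u^*}\sim\mathbf{P}$ (Girsanov), the identity holds $\mathbf{P}$-a.s.\ as well. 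By the very definition of $\theta$, $u_t^*\in\arg\max_{a\in\mathcal{U}}\{-h(a)+\theta(u_t^*)a\}$, hence $-h(u_t^*)+\xi_t u_t^* = \max_{a\in\mathcal{U}}\{-h(a)+\xi_t a\}$ holds a.e., and Proposition \ref{prop2} concludes that $u^*$ is incentive compatible with $(\pi^*,C^*)$.

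The main obstacle is this identification step: matching the $\xi$ delivered only abstractly by the martingale representation theorem in Proposition \ref{prop1} with the explicitly constructed feedback $\theta(u^*)$. This relies on correctly reading $w^*$ as a conditional expectation under the agent's \emph{own} measure $\mathbf{P}^{u^*}$ (a change of measure is implicit, and one must verify $\mathbf{P}^{u^*}\sim\mathbf{P}$ so that "almost everywhere" is measure-independent) together with uniqueness of the representing integrand. The remaining ingredients — boundedness of $\theta$ on the compact set $\mathcal{U}$ and the defining property of $\theta$ — are routine.
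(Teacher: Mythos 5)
Your proposal is correct and follows essentially the paper's own route: the paper likewise identifies the representation process $\xi$ of Proposition \ref{prop1} with $\theta(u_t^*)$ by exploiting the common terminal value $w_T = g(C^*) = w_T^*$ and the It\^{o} isometry (it does so by setting $\psi_t := w_t - w_t^*$, noting $\psi_T = 0$, and forcing the integrand $(\xi_t - \theta(u_t^*))\sigma$ to vanish a.e.), and then concludes incentive compatibility via Proposition \ref{prop2}. Your intermediate step of exhibiting $w^*$ as the agent's conditional expected future payoff under $\mathbf{P}^{u^*}$, and your explicit remarks on boundedness of $\theta$ and equivalence of measures, are just slightly more detailed packaging of the same argument.
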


\begin{proof}
Given $(\pi^*, C^*, u^*)$, the agent's expected future payoff is driven by
\begin{equation} \nonumber
\begin{split}
dw_t &= -(r^A( \pi_t^*) - h(u_t^*) + \xi_t u_t^*) dt + \xi_t dx_t\\
w_T &= g(C^*),
\end{split}
\end{equation}
where $\xi_t = \xi_t[\pi^*, C^*, u^*]$.
Due to Proposition \ref{prop2},
$u^*$ is incentive compatible with $(\pi^*, C^*)$ if and only if $\theta (u^*_t) = \xi_t$ for $t \in [0,T]$, almost everywhere. 

Now, we consider the process governed by \eqref{sde2}. Because the end-time compensation is chosen as $C^* = g^{-1}(w_T^*)$, $w_T^* = g(C^*)$, which implies that
\begin{equation}\nonumber
w_T^* = w_T.
\end{equation}
Define $\psi_t := w_t - w_t^*$, which then satisfies
\begin{equation}\nonumber
\begin{split}
d\psi_t &= (\xi_t - \theta (u_t^*)) \sigma dW_t^{u^*}\\
\psi_T &= 0,
\end{split}
\end{equation}
where 
\begin{equation}\nonumber
W_t^{u^*} := \frac{1}{\sigma} \left ( x_t - \int_0^t u_s^* ds \right )
\end{equation}
is a Brownian motion under the probability measure $\mathbf{P}^{u^*}$. 
Using the It\^{o} isometry, we have
\begin{equation}\nonumber
0 = \mathbb{E}^{u^*} \left [\psi_T^2 \right ] = \mathbb{E}^{u^*} \left [ \int_0^T  \{(\xi_t - \theta (u_t^*)) \sigma\}^2 dt \right ],
\end{equation}
which implies that 
\begin{equation} \nonumber
\xi_t = \theta (u_t^*), \quad t \in [0,T],
\end{equation}
almost everywhere.
Therefore, $u^*$ is incentive compatible with $(\pi^*,C^*)$ as desired.
\end{proof}

%
%
We are now ready to show that $(\pi^*, C^*, u^*)$ obtained by solving \eqref{principal2} is an optimal contract.

\begin{theorem} \label{thm2}
Let $(u^*, \pi^*)$ be a solution of \eqref{principal2} and $C^* := g^{-1} (w_T^*)$, where $w^*$ is the process driven by \eqref{sde2}.
Then, $(\pi^*, C^*, u^*)$ is an optimal contract, i.e., it solves \eqref{principal}.
\end{theorem}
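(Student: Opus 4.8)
The plan is to establish optimality by proving two things: that $(\pi^*, C^*, u^*)$ is a feasible contract for \eqref{principal} whose value equals the optimal value $\phi(b, y^0, 0)$ of the reformulated problem \eqref{principal2}, and that no feasible contract for \eqref{principal} can do better than $\phi(b, y^0, 0)$.

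\emph{Feasibility and attained value.} First I would verify that $(\pi^*, C^*, u^*)$ satisfies every constraint of \eqref{principal}. By Theorem \ref{thm1}, $u^*$ is incentive compatible with $(\pi^*, C^*)$ and, from its proof, $\xi_t[\pi^*, C^*, u^*] = \theta(u_t^*)$ a.e.; hence \eqref{ic2} holds and the process $w^*$ of \eqref{sde2} is exactly the solution of \eqref{continuation}--\eqref{terminal} for this choice of $\xi$, with $w_T^* = g(C^*)$ by the definition of $C^*$ and $w_0^* = b$ satisfying \eqref{ir}. It remains to place $\pi^*$ in the smaller class $\Pi$: along the recommendation, $y^*$ solves the ODE $\dot y_t^* = f(y_t^*, u_t^*)$ and $w^*$ is driven through \eqref{sde2} by the observable process $x$, while $u_t^* = u^*(w_t^*, y_t^*, t)$ and $\pi_t^* = \pi^*(w_t^*, y_t^*, t)$ are Markovian, so $(w_t^*, y_t^*)$, and therefore $\pi_t^*$, is $\mathcal{F}_t^x$-measurable. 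With feasibility established, substituting $dx_t - u_t^*\,dt = \sigma\,dW_t^{u^*}$ into \eqref{sde2} and $C^* = g^{-1}(w_T^*)$ into the objective of \eqref{principal} shows that the principal's payoff under $(\pi^*, C^*, u^*)$ coincides with the objective of \eqref{principal2} evaluated at $(u^*, \pi^*)$, which equals $\phi(b, y^0, 0)$.

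\emph{Upper bound.} Next I would take an arbitrary feasible contract $(\pi, C, u)$ for \eqref{principal} and bound its value by $\phi(b, y^0, 0)$. Constraint \eqref{ic2} forces $\xi_t = \theta(u_t)$ a.e., so \eqref{continuation} becomes $dw_t = -(r^A(\pi_t) - h(u_t))\,dt + \theta(u_t)(dx_t - u_t\,dt) = -(r^A(\pi_t) - h(u_t))\,dt + \theta(u_t)\sigma\,dW_t^{u}$, and $w_T = g(C)$ gives $C = g^{-1}(w_T)$; moreover $\pi \in \Pi \subseteq \Pi'$ and $w_0 \geq b$ by \eqref{ir}. Define $\tilde w_t := w_t - (w_0 - b)$. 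Since the drift and diffusion of the $w$-dynamics do not depend on $w$, $\tilde w$ solves the same SDE with $\tilde w_0 = b$, so $(u, \pi)$ together with $(\tilde w, y)$ is admissible for \eqref{principal2} and its objective is $\leq \phi(b, y^0, 0)$. Because $\tilde w_T = w_T - (w_0 - b) \leq w_T$ and $g^{-1}$ is increasing, $-g^{-1}(w_T) \leq -g^{-1}(\tilde w_T)$; hence the value of $(\pi, C, u)$ in \eqref{principal} is $\leq \mathbb{E}[\int_0^T u_t + r^P(y_t,\pi_t)\,dt + q(y_T) - g^{-1}(\tilde w_T)] \leq \phi(b, y^0, 0)$. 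Combined with the first part, $(\pi^*, C^*, u^*)$ attains the maximum in \eqref{principal}, which is the claim.

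\emph{Main obstacle.} I expect the delicate point to be the bookkeeping of the change of measure: one must be consistent that the expectation in \eqref{principal} is under $\mathbf{P}^{u}$ and that $W_t^{u} = \sigma^{-1}(x_t - \int_0^t u_s\,ds)$ is a $\mathbf{P}^{u}$-Brownian motion, so that the $dx$-term in \eqref{continuation} collapses into the $\sigma\,dW$ diffusion of \eqref{principal2} and the two objectives genuinely coincide in law. The secondary subtleties are the reduction $w_0 \downarrow b$ — which rests on the $w$-independence of the $w$-dynamics (equivalently, monotonicity of $\phi$ in $\bm w$) together with the monotonicity of $g^{-1}$ — and the Markovian-reconstruction argument that returns $\pi^*$ to the smaller admissible class $\Pi$. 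None of these is deep, but each must be made explicit for the equivalence of \eqref{principal} and \eqref{principal2} to be rigorous.
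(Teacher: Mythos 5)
Your proof is correct, and it is organized differently from the paper's. The paper argues by contradiction: it assumes $(\pi^*, C^*, u^*)$ is not optimal, picks a maximizer $(\hat{\pi}, \hat{C}, \hat{u})$ of \eqref{principal}, shows that any such maximizer must have $\hat{w}_0 = b$ (if $\hat{w}_0 > b$ one could lower the end-time compensation to a $C'$ with $g(C') = g(\hat{C}) - (\hat{w}_0 - b)$ and do strictly better), and then derives a strict inequality contradicting optimality of $(u^*, \pi^*)$ in \eqref{principal2}. You instead give a direct verification-plus-upper-bound argument: feasibility and attained value $\phi(b, y^0, 0)$ on one side, and on the other side a domination of \emph{every} feasible contract of \eqref{principal} by $\phi(b, y^0, 0)$, obtained by shifting $\tilde{w}_t = w_t - (w_0 - b)$ (valid precisely because the $w$-drift and diffusion do not depend on $w$, the same translation invariance the paper implicitly uses when it lowers $\hat{C}$) and invoking monotonicity of $g^{-1}$. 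The two routes rest on identical ingredients, but yours buys two small advantages: it does not presuppose that a maximizer of \eqref{principal} exists (the paper's ``choose a solution'' step is a slight logical gap if the supremum were not attained), and it needs only weak domination rather than the construction of a strictly better contract. Your explicit attention to the measure bookkeeping ($W^u_t = \sigma^{-1}(x_t - \int_0^t u_s\, ds)$ being a $\mathbf{P}^u$-Brownian motion so that the objectives of \eqref{principal} and \eqref{principal2} agree in law) and to returning $\pi^*$ to $\Pi$ via the Markovian feedback structure matches what the paper asserts more briefly.
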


\begin{proof}
We notice that $(\pi^*, C^*, u^*)$ can satisfy all the constraints of \eqref{principal} because $\xi_t = \theta (u_t^*)$ almost everywhere (due to Theorem \ref{thm1}) and $w_T^* = g(C^*)$. (Also note that $\pi^* \in \Pi$ and $C^* \in \mathbb{C}$ because $(\pi^*_t, u^*_t)$ is Markovian with respect to $(w_t^*, y_t^*)$.)
Suppose that $(\pi^*, C^*, u^*)$ is not a solution of \eqref{principal}, and
choose a solution, $(\hat{\pi}, \hat{C}, \hat{u})$, of \eqref{principal}.

We first claim that $(\hat{\pi}, \hat{C}, \hat{u})$ satisfies all the constraints of \eqref{principal2} with the process $\hat{w}$ defined by
\begin{equation}\nonumber
\begin{split}
d\hat{w}_t &= -(r^A(\hat{\pi}_t) - h(\hat{u}_t)) dt + \xi_t (d\hat{x}_t - \hat{u}_t dt)\\
\hat{w}_T &= g(\hat{C}),
\end{split}
\end{equation}
where $d\hat{x}_t =  \hat{u}_t dt + \sigma dW_t$. 
Because $\hat{w}$ solves \eqref{continuation2} with $\hat{\pi}_t$, $\hat{u}_t$ and $\xi_t = \theta (\hat{u}_t)$ a.e., it suffices to show that $\hat{w}_0 = b$. Assume $\hat{w}_0 > b$. Then, there exists $C' < \hat{C}$ such that 
\begin{equation}\nonumber
g(C') = g(\hat{C}) - (\hat{w}_0 - b)
\end{equation}
because $g$ is continuous and increasing. Then, $(\hat{\pi}, C', \hat{u})$ satisfies all the constraints of \eqref{principal} and
is strictly better than $(\hat{\pi}, \hat{C}, \hat{u})$.
This is a contradiction and $\hat{w}_0$ must be equal to the participation payoff $b$. Therefore, $(\hat{\pi}, \hat{C}, \hat{u})$ with $\hat{w}$ can satisfy all the constraints of \eqref{principal2}. 

Because $(\hat{\pi}, \hat{C}, \hat{u})$ solves \eqref{principal} while $(\pi^*, C^*, u^*)$ does not, we have
\begin{equation} \label{ineqG}
\begin{split}
&\mathbb{E} \left [\int_0^T \hat{u}_t + r^P(\hat{y}_t, \hat{\pi}_t) dt +  q(\hat{y}_T) - g^{-1} (\hat{w}_T) \right ]\\
&> \mathbb{E} \left [\int_0^T u_t^* + r^P(y_t^*, \pi_t^*) dt +  q(y_T^*) - g^{-1} (w_T^*) \right ],
\end{split}
\end{equation}
where $d\hat{y}_t = f(\hat{y}_t, \hat{u}_t)dt$ and $dy^*_t = f(y_t^*, u_t^*) dt$ and $\hat{y}_0 = y_0^* = y^0$.
Inequality \eqref{ineqG} contradicts the fact that $(\pi^*, C^*, u^*)$ is a solution to \eqref{principal2}.
Therefore, $(\pi^*, C^*, u^*)$ must be a solution of \eqref{principal} and hence an optimal contract.
\end{proof}

In the contract, the compensation scheme and recommended control are written as feedback maps of $(w_t^*, y_t^*)$, which might be different from the actual $(w_t, y_t)$. However, by offering both the compensations and the recommended control that is incentive compatible, the principal induces the agent to have no incentive to deviate from $u^*$ and $(w_t^*, y_t^*)$. Therefore, the principal can maximize her utility with this contract, as expected.

\section{Application to Indirect Load Control} \label{example}

We consider a scenario in which an electricity utility company makes a contract that is renewed daily with an aggregation of its customers to indirectly control the customers' air conditioners. 
Let $u_t^i$ be the power consumption (in kW) by  customer $i$'s air conditioner at time $t$ for $i = 1, \cdots, N$.
We impose an ideal assumption that the customers in the aggregation agree to equally control the power consumption, i.e., $u^i \equiv u$ for $i=1, \cdots, N$. Then, the total power consumption by the air conditioners at time $t$ is given by $Nu_t$.
The authority of control $u$ is given to the customers: the utility company has no capability to directly control $u$. 
Let $\lambda_t$ be the balancing price, which is chosen as the locational marginal price (LMP) of unit power in kW at time $t$ in the real-time market. We also let $\zeta$ be the energy price in the customers' tariff.
Then, the utility company's revenue process due to the air conditioners in the real-time market is driven by the following SDE:
\begin{equation}\nonumber
dx_t = (\zeta -\lambda_t) N u_t dt + \sigma dW_t,
\end{equation}
where the diffusion term models the uncertainty introduced by loads other than the air conditioners or distributed generations.
This is a simplified model of the utility's revenue or saving: 
we assume that day-ahead purchases are always less than the minimum feasible real-time demand. This is consistent with existing market structures, e.g., the California ISO's.
Therefore the utility company must always purchase some amount of electricity in real time. 
A more detailed revenue model in the real-time market will be studied in the future.

If the utility company offers a contract without taking into account the customers' indoor temperatures and the aggregation of customers accepts it, the customers may experience uncomfortable indoor temperatures during the contract period and choose not to enter a contract with the utility company again. 
There are two major ways for the utility company to take into account the customers' indoor temperatures in the contract:
\begin{enumerate}
\item
the utility company can design a contract assuming that the customers' payoff functions also depend on their indoor temperatures;
\item
the utility company can choose a compensation scheme that incentivizes the customers to maintain their indoor temperatures within $[\underline{y}, \overline{y}]$.
\end{enumerate}
Although the first approach seems to be more natural, it requires 
quantifying the customers' payoff as a function of the indoor temperature: if the utility company underestimates or overestimates the customers' value of their indoor temperatures, the recommended control specified in the contract is no longer incentive compatible.
The second approach is feasible because the utility company has a clear incentive to take into account the customers' indoor temperatures to make the contract desirable enough for the customers to enter into the contract every day. It does not even require modeling the payoff of the customer as a function of the indoor temperature.

\begin{figure}[tb] 
\begin{center}
\includegraphics[width =3.9in]{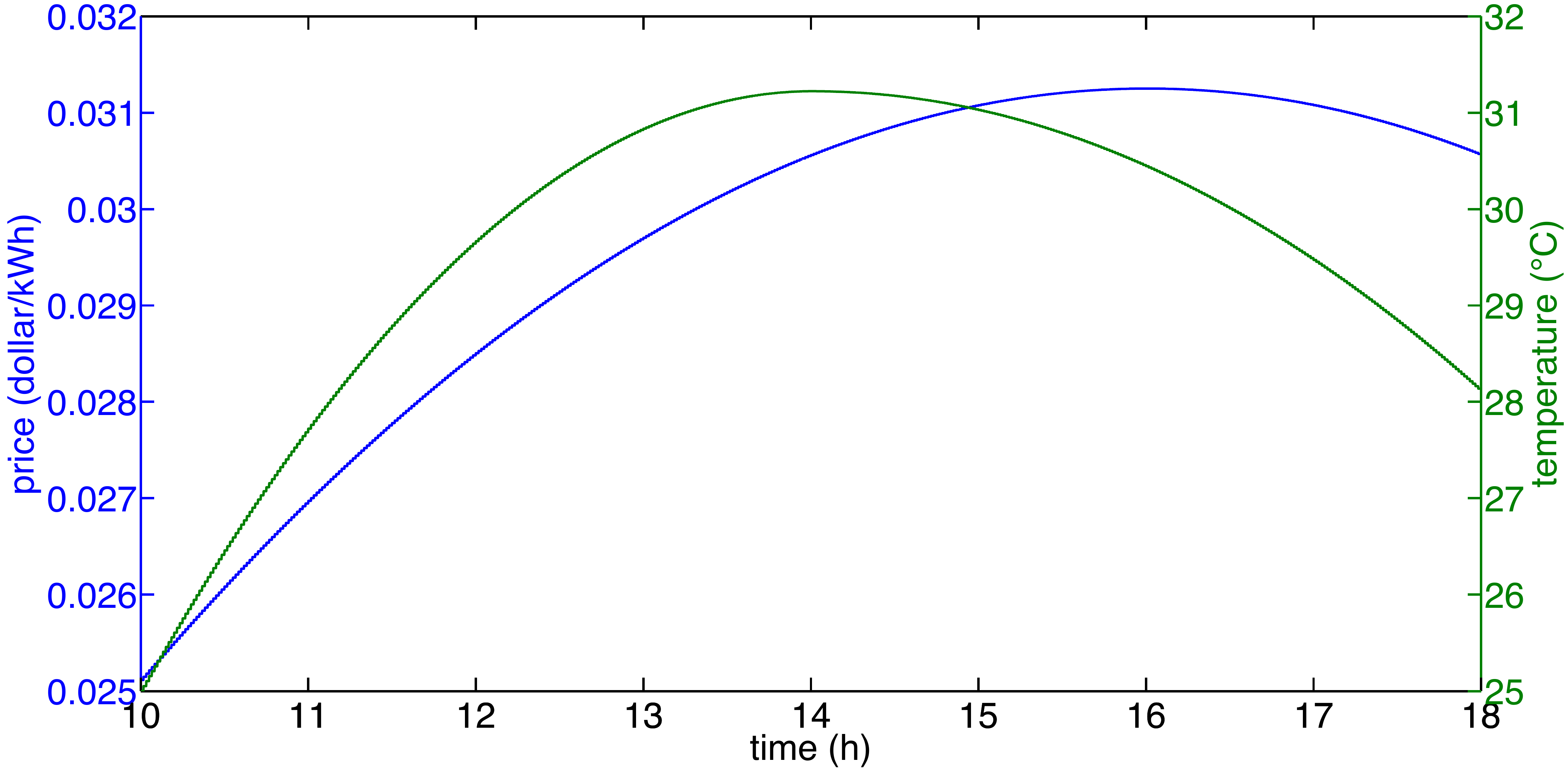}
\caption{Balancing price or LMP, $\lambda_t$, in the real-time market}
 \label{fig:price}
 \end{center}
\end{figure}

In this example, we use the second approach. Let $y_t^i$ be the customer $i$'s indoor temperature at time $t$.
We assume that the outdoor temperature profile $\{\Theta_t\}_{0\leq t \leq T}$ is given.
Then, the customer $i$'s indoor temperature process can be modeled as
\begin{equation}\nonumber
dy_t^i = [\alpha_i(\Theta_t - y_t^i) - \kappa_i u_t^i ]dt,
\end{equation}
for some positive constant $\kappa$ that converts an increase in energy (kWh) to a reduction in temperature ($^\circ$C).
More detailed explanation of this equivalent thermal parameter model can be found in \cite{Sonderegger1978, Yang2014}.
If we assume that  $\alpha_i \equiv \alpha$, $\kappa_i \equiv \kappa$ and $y_0^i \equiv y^0$ for $i=1, \cdots, N$, then the indoor temperatures for all customers have the same dynamics, i.e., $y_t:=y_t^i$, $i=1, \cdots, N$. 

We model the customers' total cost function as the energy cost 
for operating their air conditioners, i.e.,
\begin{equation}\nonumber
\begin{split}
h(u_t) &= \sum_{i=1}^N \zeta u_t^i= \zeta N u_t. 
\end{split}
\end{equation}
Recall that $\zeta$ is the energy price in the customers' tariff.
The utility company cares about the customers' indoor temperature level as well as the compensation that it should transfer to the customers. We therefore model the utility company's running reward as
\begin{equation}\nonumber
r^P(y_t, \pi_t) = - \eta_1 \left[  \exp(\eta_2(y_t - \overline{y})) + \exp(\eta_2(\underline{y} - y_t))  \right ] - \pi_t
\end{equation}
for some positive constants $\eta_1$ and $\eta_2$,
assuming the utility company wants to maintain the indoor temperature within the  range, $[\underline{y}, \overline{y}]$, chosen by the customers when writing the contract. 

\begin{figure}[tb] 
\begin{center}
\includegraphics[width =3.9in]{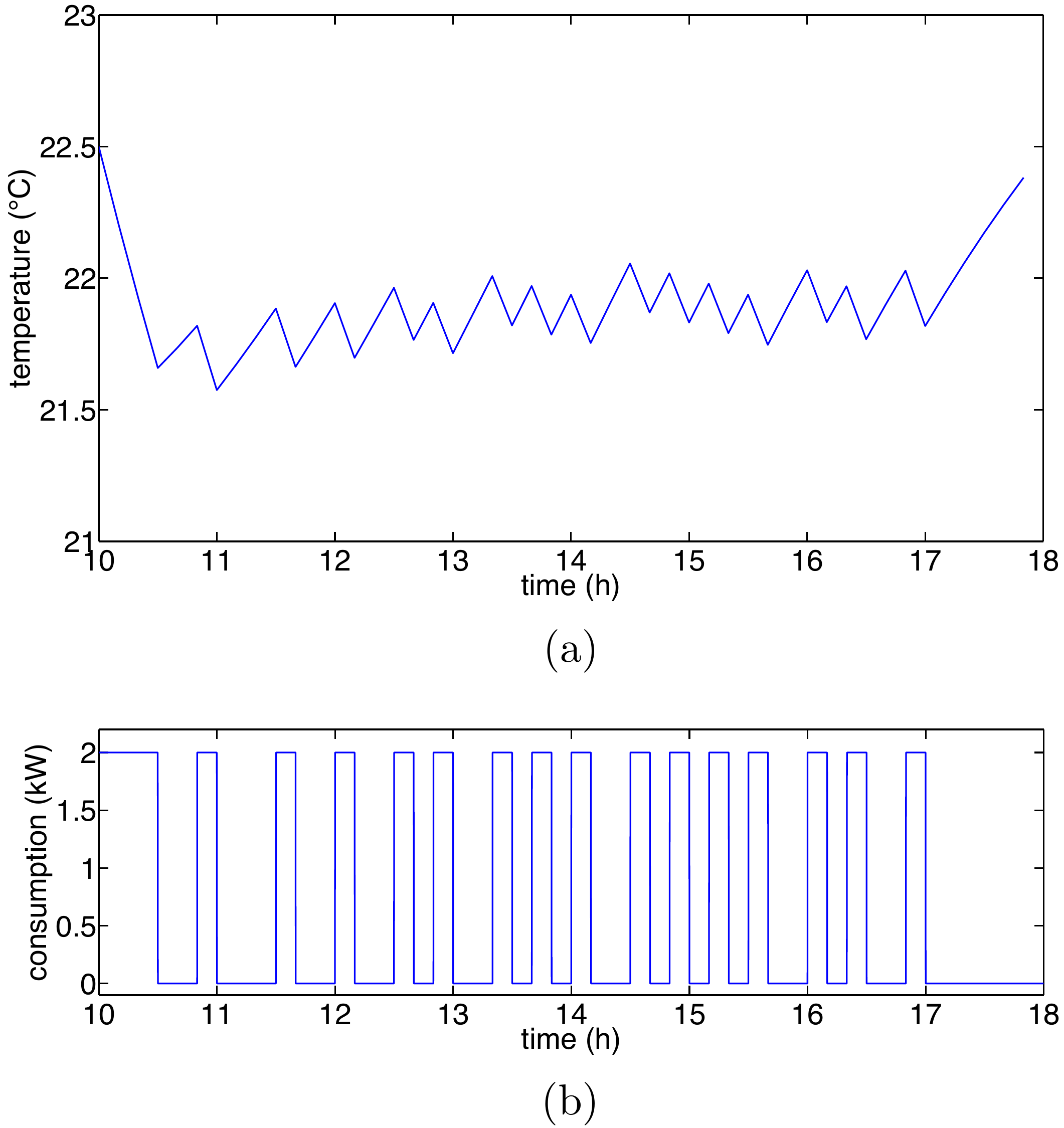}
\caption{(a) Indirectly controlled indoor temperature, and (b) incentive compatible control of each customer.  }
 \label{fig:contract}
 \end{center}
\end{figure}

Once the aggregation of customers receives the real-time compensation, $\pi_t$, from the utility company, we assume that the customers agree to share it equally. That is, the compensation that customer $i$ receives is 
$\pi_t^i = \frac{1}{N} \pi_t =: \bar{\pi}_t$.
The total payoff of the aggregation of customers for the real-time compensation from the utility company can be modeled as
$r^A(\pi_t) =  N  \bar{\pi}_t = \pi_t$.
Similarly, we set $g(C) := C$.

We set the contract period as the period from $10$am to $6$pm and choose $N=1000$, i.e., there are $1000$ customers in the aggregation. 
The total participation payoff is selected as $b = -0.1N$, i.e., each individual's expected payoff is $-\$ 0.1$. 
The real-time balancing price that we use is given in Fig. \ref{fig:price}.
We use $\overline{y} = 22.5$, $\underline{y} = 18$, $y^0 = 22.5$, $\sigma = 200$, $\zeta = 0.2$, $\eta_1 = 10$, $\eta_2 = 5$ and $\kappa = 1$ in the simulation. 
We also set $\mathcal{U} := \{0, 2 \}$ for ON/OFF control of air conditioners and set $\mathcal{P} := [0, 0.2]$.


The incentive compatible control obtained using the proposed method 
is shown in Fig. \ref{fig:contract} (b). 
We notice that each customer turns on his or her air conditioner to 
do pre-cooling  in the early morning.
Fig. \ref{fig:price} shows that the balancing price in the real-time market is low in the morning but has a peak at about $4$pm. Therefore, it is beneficial for the utility company if each customer uses his or her air conditioner in the morning when the balancing price is low and does not use it much when the price is high. Because the compensation induces the desired behavior of the customer, we can see that the dynamic contract is appropriately designed.
Fig. \ref{fig:contract} (a) shows the indoor temperature controlled by the customer. As desired, the indoor temperature stays within the temperature range $[\underline{y}, \overline{y}]$.
More specifically, it decreases a little in the morning (pre-cooling) but keeps slightly increasing after $11$am due to the incentive compatible control shown in Fig. \ref{fig:contract} (a).

\section{Conclusions and Future Work} 

We proposed a dynamic contract design method when the agent has the authority to control both the principal's revenue and an engineered system while the principal has no capability of monitoring the agent's control of the state of the engineered system. 
We showed that Sannikov's idea of using the agent's future expected payoff as a state variable is useful even in our (more general) setting in which the principal has partial observations if the end-time compensation is chosen appropriately and the engineered system is tracked as another index of the agent's performance.
By reformulating the contract design problem as a stochastic optimal control problem, we numerically solved the problem using the associated Hamilton-Jacobi-Bellman equation.
The performance and usefulness of the proposed method were demonstrated by an indirect load control problem.

In the future, we plan to propose a \emph{resilient} contract in a more general setting. 
Furthermore, the application of the proposed method to indirect load control problems should be considered under more realistic assumptions by introducing the heterogeneity of customers and using detailed real-time market with price dynamics.

\section*{Acknowledgements}
The authors would like to thank Professor Lawrence Craig Evans for helpful discussions on the principal-agent problem and PDE-based approaches for it.
This work was supported by the NSF CPS project ActionWebs under grant number 0931843, NSF CPS project FORCES under grant number 1239166,
Robert Bosch LLC through its Bosch Energy Research Network funding program and NSF CPS Award number 1239467.

\bibliographystyle{iEEEtran}

\bibliography{contract_po}
\vfill\eject

\end{document}